\newtheorem{theorem}{Theorem}[section]
\newtheorem{definition}[theorem]{Definition}
\newtheorem{remark}[theorem]{Remark}
\newtheorem{proposition}[theorem]{Proposition}
\def\Q{\mathbb{Q}}
\def\F{\mathbb{F}}
\def\R{\mathbb{R}}
\def\Z{\mathbb{Z}}
\def\Cm{\mathbb{C}}
\def\B{\mathcal{B}}
\def\D{\mathcal{D}}
\def\I{\mathcal{I}}
\def\M{\mathcal{M}}
\def\ra{\rightarrow}
\def\ov{\overline}
\def\st{\stackrel}
\def\wh{\widehat}
\begin{document}

\title{On the Lefschetz trace formula for Lubin-Tate spaces}
\author{Xu SHEN}
\date{}
\address{D\'epartement de Math\'ematiques d'Orsay\\
Universit\'e Paris-Sud 11\\
Orsay, 91405, France} \email{xu.shen@math.u-psud.fr}

\begin{abstract}
We reprove the Lefschetz trace formula for Lubin-Tate spaces, based on the locally finite cell decompositions of these spaces obtained in \cite{F3},\cite{F4}, and Mieda's theorem of Lefschetz trace formula for certain open adic spaces (\cite{Mi1} theorem 3.13). This proof is rather different from those in \cite{St} (theorem 3.3.1) and \cite{Mi1} (example 4.21), and is quite hopeful to generalized to some other Rapoport-Zink spaces as soon as there exist some suitable cell decompositions. For example, we proved a Lefschetz trace formula for some unitary group Rapoport-Zink spaces in \cite{Sh} by using similar ideas here.
\end{abstract}
\maketitle

\section{Introduction}
Let $p$ be a prime number, $F$ be a finite extension of $\Q_p$, $O$ be the ring of integers of $F$, and $\pi\in O$ be a uniformizer in $O$. We denote $\wh{F}^{nr}$ as the completion of the maximal unramified extension of $F$, and $\wh{O}^{nr}$ its ring of integers. For any integer $n\geq 1$, we consider the general linear group $GL_n$ as well as its inner form $D^\times$ over $F$, where $D$ is the central division algebra over $F$ with invariant $\frac{1}{n}$ and $D^\times$ is the reductive group defined by inverse elements in $D$. Recall a formal $O$-module is a $p$-divisible group with an $O$-action over a base over $O$, such that the induced action on its Lie algebra is the canonical action of $O$. We consider the formal Lubin-Tate space $\wh{\M}=\coprod_{i\in\Z}\wh{\M}^i$ over $\wh{O}^{nr}$: for any scheme $S\in \textrm{Nilp}\wh{O}^{nr}$,
$\wh{\M}(S)=\{(H,\rho)\}/\simeq$, where
\begin{itemize}
\item $H$ is a formal $O$-module over $S$,
\item $\rho: \mathbb{H}_{\ov{S}}\ra H_{\ov{S}}$ is a quasi-isogeny.
\end{itemize}
Here $\textrm{Nil}\wh{O}^{nr}$ is the category of schemes over $\wh{O}^{nr}$ on which $\pi$ is locally nilpotent, $\mathbb{H}$ is the unique (up to isomorphism) formal $O$-module over $\ov{\F}_p$ with $O$-height $n$, and $\ov{S}$ is the closed subscheme defined by $\pi$ of $S\in \textrm{Nilp}\wh{O}^{nr}$. For $i\in\Z$, $\wh{\M}^i$ is the open and closed subspace of $\wh{\M}$ such that the quasi-isogenies $\rho$ have $O$-height $i$. There is a natural (left) action of $D^\times$ on $\wh{\M}$ by $\forall b\in D^\times, b:\wh{\M}\ra\wh{\M},\; (H,\rho)\mapsto(H,\rho\circ b^{-1})$. This action induces non-canonical isomorphisms \[\wh{\M}^i\simeq \wh{\M}^0,\]while one knows that there is a non-canonical isomorphism\[\wh{\M}^0\simeq Spf(\wh{O}^{nr}[[x_1,\dots,x_{n-1}]]).\]

Let $\M=\wh{\M}^{an}=\coprod_{i\in\Z}\M^i$ be the Berkovich analytic fiber of $\wh{\M}$. By trivializing the local system over $\M$ defined by the Tate module of $p$-divisible group, we have the Lubin-Tate tower $(\M_K)_{K\subset GL_n(O)}$ over $\wh{F}^{nr}$, and the group $GL_n(F)$ acts (on right) on this tower through Hecke correspondences. When $K=K_m:=ker(GL_n(O)\ra GL_n(O/\pi^mO))$ for some integer $m\geq 0$, there is a regular model $\wh{\M}_m$ of $\M_{K_m}$ by introducing Drinfeld structures on $O$-modules. We will not use these models and we will work always on the Berkovich spaces $\M_K$. Note there are natural actions of $D^\times$ on each $\M_K$, which commute with the Hecke action.

Fix a prime $l\neq p$, let $\ov{\Q}_l$ (resp. $\ov{\Q}_p$) be a fixed algebraic closure of $\Q_l$ (resp. $\Q_p$), and $\Cm_p$ be the completion of $\ov{\Q}_p$. For each $i\geq0$, we consider the cohomology with compact support
\[H^i_c(\M_K\times\Cm_p,\ov{\Q}_l)=\varinjlim_{U}\varprojlim_{n}H^i_c(U\times\Cm_p,\Z/l^n\Z)\otimes\ov{\Q}_l,\]
where the injective limit is taken over all locally compact open subsets $U\subset\M_K$, see \cite{F1} section 4 and \cite{Hu2}. We have
\[H^i_c(\M_K\times\Cm_p,\ov{\Q}_l)=\bigoplus_{j\in\Z}H^i_c(\M_K^j\times\Cm_p,\ov{\Q}_l),\]
where \[dim_{\ov{\Q}_l}H^i_c(\M_K^j\times\Cm_p,\ov{\Q}_l)<\infty\]by theorem 3.3 in \cite{Hu2}. In fact we have also the usual $l$-adic cohomology groups $H^i(\M_K^j\times\Cm_p,\ov{\Q}_l)$ which are Poincar\'e dual to those $H^i_c(\M_K^j\times\Cm_p,\ov{\Q}_l)$, and (cf. \cite{St} lemma 2.5.1)
\[H^i_c(\M_K^j\times\Cm_p,\ov{\Q}_l)\neq 0\Leftrightarrow n-1\leq i\leq 2(n-1),\]
\[H^i(\M_K^j\times\Cm_p,\ov{\Q}_l)\neq 0\Leftrightarrow 0\leq i\leq n-1.\]
The groups
\[\varinjlim_{K}H^i_c(\M_K\times\Cm_p,\ov{\Q}_l)\] are natural smooth representations of $GL_n(F)\times D^\times\times W_F$ ($W_F$ is the Weil group of $F$), and the local Langlands and Jacquet-Langlands correspondences between the three groups were proved realized in these groups, see \cite{C} and \cite{HT}.

In \cite{St} Strauch had proven a Lefschetz trace formula for regular elliptic elements action on the Lubin-Tate spaces. More precisely, we consider
\[H^\ast_c(\M_K^j\times\Cm_p,\ov{\Q}_l)=\sum_{i}(-1)^iH^i_c(\M_K^j\times\Cm_p,\ov{\Q}_l).\]
Let $\gamma=(g,b)\in GL_n(F)\times D^\times$ such that $g,b$ are both regular elliptic elements, $gKg^{-1}=K$, $v_p(detg)+v_p(Nrdb)=0$ ($Nrd: D^\times\ra F^\times$ is the reduced norm and $v_p$ is the valuation on $F$), then we have an automorphism \[\gamma:\M_K^j\ra\M_K^j,\]which induces morphism on cohomology groups
\[\gamma: H^i_c(\M_K^j\times\Cm_p,\ov{\Q}_l)\ra H^i_c(\M_K^j\times\Cm_p,\ov{\Q}_l).\]We define
\[Tr(\gamma|H^\ast_c(\M_K^j\times\Cm_p,\ov{\Q}_l)):=\sum_{i}(-1)^iTr(\gamma|H^i_c(\M_K^j\times\Cm_p,\ov{\Q}_l)).\]
Strauch proved the following trace formula.
\begin{theorem}[\cite{St}, Theorem 3.3.1]  Under the above assumptions and notations, we have
\[Tr(\gamma|H^\ast_c(\M_K^j\times\Cm_p,\ov{\Q}_l))=\#\textrm{Fix}(\gamma|\M_K^j\times\Cm_p).\]
\end{theorem}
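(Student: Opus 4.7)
The plan is to reprove the formula by combining the two ingredients announced in the abstract: Fargues's locally finite cell decomposition of the Lubin--Tate tower (\cite{F3},\cite{F4}), and Mieda's general Lefschetz trace formula for certain open adic spaces (\cite{Mi1}, Theorem 3.13). I would work throughout with the adic space associated to $\M_K^j$ so as to fit Mieda's framework, while keeping track of the combined action of $\gamma=(g,b)$ through the Hecke correspondence of $g$ and the natural left action of $b$.

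First I would recall Fargues's construction to exhibit $\M_K^j$ as a locally finite union of open ``cells'' of a manageable shape, and observe that $\gamma$ permutes the set of cells. Since $g,b$ are both regular elliptic and $v_p(\det g)+v_p(\textrm{Nrd}\, b)=0$, the fixed set $\textrm{Fix}(\gamma|\M_K^j\times\Cm_p)$ is finite; this is standard in the regular elliptic setting, where fixed points correspond to finitely many rigid points on the relevant deformation spaces. In particular only finitely many cells contain a fixed point, and by local finiteness there exists a $\gamma$-stable open subspace $U\subset\M_K^j$, obtained as a finite union of $\gamma$-orbits of cells, containing every fixed point of $\gamma$.

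The second step is to apply Mieda's Theorem 3.13 to $U$. One verifies that $U$, being a finite union of Fargues cells, is smooth and partially proper and satisfies the hypotheses of \cite{Mi1}, so that
\[\textrm{Tr}(\gamma|H^\ast_c(U\times\Cm_p,\ov{\Q}_l))=\#\textrm{Fix}(\gamma|U\times\Cm_p)=\#\textrm{Fix}(\gamma|\M_K^j\times\Cm_p).\]
It then remains to show that the open immersion $U\hookrightarrow\M_K^j$ induces an equality $\textrm{Tr}(\gamma|H^\ast_c(\M_K^j\times\Cm_p,\ov{\Q}_l))=\textrm{Tr}(\gamma|H^\ast_c(U\times\Cm_p,\ov{\Q}_l))$. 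Here the locally finite cell decomposition is used decisively: adding in one $\gamma$-orbit of cells at a time, an excision long exact sequence in compactly supported cohomology together with the standard observation that a nontrivial finite orbit of cells contributes zero to the alternating trace allows one to cancel the contributions of every cell outside $U$. Combining the two equalities gives the theorem.

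The main obstacle is this final ``cancellation'' step, since the cell decomposition is infinite and one cannot literally sum the contributions of all cells. One must argue by a controlled limiting procedure, exploiting local finiteness to ensure that every compactly supported class is captured by a finite subunion of cells and that the $\gamma$-orbit structure on the cells outside $U$ is effectively finite. A secondary technical difficulty, but one closer to a bookkeeping issue, is the verification that each finite union of Fargues cells does satisfy Mieda's hypotheses on partial properness and admissibility of the $\gamma$-action; this reduces to local assertions about the cells themselves, which are essentially period-domain-like and well understood from \cite{F3},\cite{F4}.
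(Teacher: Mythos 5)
Your outline coincides with the paper's strategy (Fargues cells plus Mieda's Theorem 3.13), but the two places you defer are exactly where the content lies, and as written both are gaps. First, the verification of Mieda's hypotheses is not a local or bookkeeping matter about individual cells. Theorem 3.13 of \cite{Mi1} requires a quasi-compact open piece $V^{ad}$ together with a proper compactification $\ov{V^{ad}}$ such that $\gamma$ moves the boundary $\ov{V^{ad}}\setminus V^{ad}$ off itself; this is a global statement about how $\gamma$ displaces points near the edge of the chosen finite union of cells. The paper establishes it by indexing the cells by vertices of (a quotient of) the Bruhat--Tits building and proving the displacement estimate $\ov{d}(x,\gamma x)\to\infty$ as $\ov{d}(o,x)\to\infty$, which uses regular ellipticity of $g$ in an essential way: compactness of the fixed-point set $(\B')^{g}$, its stability under shrinking $K$, and Rousseau's inequality $d'(x,\sigma x)\geq 2\,d'(x,(\B')^{\sigma})\sin(\theta/2)$. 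Without this estimate Mieda's theorem does not apply (and the formula is false for non-elliptic $\gamma$), so you cannot treat it as a routine admissibility check. The same estimate is also what guarantees that all fixed points lie in a fixed finite union of cells, which you instead assert as a separate ``standard'' finiteness fact.

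Second, your passage from $U$ to all of $\M_K^j$ by adding one $\gamma$-orbit of cells at a time does not go through: the complement of $U$ is an infinite, locally finite union of overlapping compact analytic domains, its compactly supported cohomology need not be finite-dimensional, so the alternating trace you want to cancel is not even defined, and the excision sequences do not converge trace-by-trace. The paper avoids this entirely. It only needs the orbit-cancellation argument (the induction of \cite{Mi1} Proposition 4.10) on the \emph{finite} annulus $V_\rho\setminus U_\rho$ of boundary cells, each moved off itself by $\gamma$; for the rest it proves a genuine stabilization statement (Proposition 3.5): $H^i_c(U_\rho\times\Cm_p,\ov{\Q}_l)\simeq H^i_c(\M_K^0\times\Cm_p,\ov{\Q}_l)$ for $\rho\gg 0$, which rests on Huber's finiteness theorem for compactly supported cohomology and the explicit description of the cells (hence of $V_\rho$) as locally closed balls via the Newton polygon coordinates. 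You would need to supply an argument of this kind in place of your ``controlled limiting procedure.''
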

By applying the $p$-adic period mapping \[\M\ra\mathbf{P}^{n-1,an},\] Strauch obtained a nice fixed points number formula for the quotient space $\M_K/\pi^\Z$ (theorem 2.6.8 in loc. cit.)
\[\#\textrm{Fix}(\gamma|(\M_K/\pi^\Z)(\Cm_p))=n\#\{h\in GL_n(F)/\pi^\Z K|h^{-1}g_bh=g^{-1}\},\]
which can be rewritten as some suitable orbit integral, see \cite{Mi2} proposition 3.3. This Lefschetz trace formula enable Strauch to prove the Jacquet-Langlands correspondence between smooth representations of $GL_n(F)$ and $D^\times$ is realized the cohomology of the tower $(\M_K)_{K}$, not involving with Shimura varieties as in \cite{HT}, see section 4 of \cite{St}.

There are two main ingredients in Strauch's proof of the above theorem. The first is some careful approximation theorems of Artin in this special (affine) case, and the second is Fujiwara's theorem of specialization of local terms (\cite{Fu} proposition 1.7.1). In general case one has no sufficient approximation theorems, thus his method can be hardly generalized. In \cite{Mi1} Mieda proved a general Lefschetz trace formula for some open smooth adic spaces by totally working in rigid analytic geometry, and verified his conditions in the special case of Lubin-Tate spaces hold, thus he can reprove the above Lefschetz trace formula. Both Strauch and Mieda worked in the category of adic spaces, and study the action of $\gamma$ on the boundary strata (outside the corresponding Berkovich space) of the analytic generic fiber of $\wh{\M}_m$. Their boundary stratas are at last linked to the theory of generalized canonical subgroups (cf. \cite{F2} section 7), thus their approachs can hardly be generalized.

In this note we work mainly with Berkovich spaces. We will consider Fargues's locally finite cell decomposition of Lubin-Tate spaces, cf. \cite{F3} chapter 1 and \cite{F4}. By studying the action of $\gamma$ on these cells, we verify the conditions in Mieda's theorem of Lefschetz trace formula hold, by the dictionary between the equivalent categories of Hausdorff strictly Berkovich $k$-analytic spaces and adic spaces which are taut and locally of finite type over $Spa(k,k^0)$. ($k$ is a complete non-archimedean field and $k^0$ is its ring of integers.) Thus we can reprove the above theorem, by a different method. The advantage of our method is that, once we know there exists a locally finite cell decomposition, with the fundamental domain compact, then by studying the action on the cells we will easily verify Mieda's theorem applies. For example, we can treat the case of some unitary group Rapoport-Zink spaces in \cite{Sh}, and we will also treat the case of the basic Rapoport-Zink spaces for $GSp4$.

In next section we review the locally finite cell decomposition of $\M_K$, and in section 3 we study the action of $\gamma$ on the cells and verify Mieda's theorem applies.\\
 \\
\textbf{Acknowledgments.} I would like to thank Laurent Fargues sincerely for sharing the idea of cell decomposition of Rapoport-Zink spaces, and the suggestion that this will suffice to prove the Lefschetz trace formula. I would like to thank also Bertrand R\'emy sincerely for replying my question on buildings, and pointing me the article \cite{R}. Yoichi Mieda had proposed me some useful questions after the first version of this article, so special thanks go to him as well.

\section{The locally finite cell decomposition of Lubin-Tate spaces}

 In \cite{F3} and \cite{F4} Fargues found some locally finite cell decompositions of $\M_K$. The parameter set of cells in \cite{F3} is the set of vertices of some Bruhat-Tits building, and these cells for $K$ varies form in fact a cell decomposition of the tower $(\M_K)_K$ but not for a fixed space $\M_K$. Therefore we will mainly follow the construction in \cite{F4}, where the parameter set is essentially some set of Hecke correspondences. To consider the group actions on these cells, we will relate the parameter set with a Bruhat-Tits building by borrowing some ideas from \cite{F3}.

 First consider the case without level structures. Fix a uniformizer $\Pi\in D^\times$, then \[\Pi^{-1}:\M^i\st{\sim}{\longrightarrow}\M^{i+1}.\]Let $\M^{ss}$ be the semi-stable locus in $\M$, i.e. the locus where the associated $p$-divisible groups are semi-stable in the sense of \cite{F4} definition 4, which is a closed analytic domain in $\M$. Let $\D=\M^{ss,0}:=\M^{ss}\bigcap\M^0$, then $\M^{ss}=\coprod_{i\in\Z}\Pi^{-i}\D$ and $\D$ is the compact fundamental domain of Gross-Hopkins, see \cite{F3} 1.5. The main results of \cite{F4} for our special case say that we have a locally finite covering \[\M=\bigcup_{\begin{subarray}{c}T\in GL_n(O)\setminus GL_n(F)/GL_n(O)\\i=0,\dots,n-1\end{subarray}}T.\Pi^{-i}\D,\]
where $T.A$ is the image under the Hecke correspondence $T$ for a subset $A$, which is an analytic domain if $A$ is.
In the following we shall actually work with one component $\M^0$, so we consider its induced cell decomposition
\[\M^0=\bigcup_{\begin{subarray}{c}T\in GL_n(O)\setminus GL_n(F)/GL_n(O)\\i=0,\dots,n-1\end{subarray}}((T.\Pi^{-i}\D)\bigcap\M^0).\] For $T\in GL_n(O)\setminus GL_n(F)/GL_n(O), i=0,\dots,n-1$,
\[(T.\Pi^{-i}\D)\bigcap\M^0\neq \emptyset\Leftrightarrow -v_p(detT)+i=0,\]in which case \[T.\Pi^{-i}\D\subset\M^0.\]
 Here ($v_p: F^\times\ra\Z$ is the valuation of $F^\times$ such that $v_p(\pi)=1$) the composition $v_p\circ det: GL_n(F)\ra\Z$ factors through $GL_n(O)\setminus GL_n(F)/GL_n(O)\ra\Z$. Thus we have
\[ \M^0=\bigcup_{\begin{subarray}{c}T\in GL_n(O)\setminus GL_n(F)/GL_n(O)\\i=0,\dots,n-1\\-v_p(detT)+i=0\end{subarray}}T.\Pi^{-i}\D.\]

Let $K\subset GL_n(O)$ be an open compact subgroup, $\pi_K:\M_K\ra\M$ be the natural projection. We set
\[\D_K=\pi_K^{-1}(\D),\]which is a compact analytic domain in $\M_K^0$. Since the group $GL_n(O)$ acts trivially on $\M$, any element in this group will stabilize $\D_K$. Thus for two Hecke correspondences $T_1,T_2\in K\setminus GL_n(F)/K$ having the same image under the projection $K\setminus GL_n(F)/K\ra GL_n(O)\setminus GL_n(F)/K$, we have
$T_1\Pi^{-i}\D_K=T_2\Pi^{-i}\D_K$ ($\Pi^{-i}\D_K=\pi_K^{-1}(\Pi^{-i}\D)$ since $\pi_K$ is $D^\times$-equivariant). Therefore, we have the following locally finite cell decomposition in level $K$
\[\M_K=\bigcup_{\begin{subarray}{c}T\in GL_n(O)\setminus GL_n(F)/K\\i=0,\dots,n-1\end{subarray}}T.\Pi^{-i}\D_K.\]
We will denote the cells $T.\Pi^{-i}\D_K$ by \[\D_{T,i,K},\] which are compact analytic domains. For any $T\in GL_n(O)\setminus GL_n(F)/K, i\in\Z$, we denote also $\D_{T,i,K}=T.\Pi^{-i}\D_{K}$.
Since the (right) action of $F^\times$ on $\M_K$ through $F^\times\ra GL_n(F), z\mapsto z$ is the same as the (left) action of it on $\M_K$ through
$F^\times\ra D^\times,z\mapsto z$, $(z,z^{-1})\in GL_n(F)\times D^\times$ acts trivially on $\M_K$. We have
\[\D_{T,i,K}=\D_{Tz,i+nv_p(z),K}.\]
If $g\in GL_n(F)$ is an element such that $gKg^{-1}=K$, and $b\in D^\times$ is an arbitrary element, set \[\gamma:=(g,b).\] Then automorphism $\gamma:\M_K\ra\M_K$ naturally induces an action of $\gamma$ on the set of cells of $\M_K$:
\[\gamma(\D_{T,i,K})=\D_{Tg, i-v_p(Nrdb), K}.\]
Here $Nrd: D^\times\ra F^\times$ is the reduced norm.

For the component $\M_K^0$, for $T\in GL_n(O)\setminus GL_n(F)/K, i=0,\dots,n-1$,
\[(T.\Pi^{-i}\D_K)\bigcap\M_K^0\neq \emptyset\Leftrightarrow -v_p(detT)+i=0,\]in which case
\[T.\Pi^{-i}\D_K\subset\M_K^0.\]Thus
  we have a locally finite cell decomposition
\[\begin{split}\M_K^0&=\bigcup_{\begin{subarray}{c}T\in GL_n(O)\setminus GL_n(F)/K\\i=0,\dots,n-1\end{subarray}}((T.\Pi^{-i}\D_K)\bigcap\M_K^0)\\
&=\bigcup_{\begin{subarray}{c}T\in GL_n(O)\setminus GL_n(F)/K\\i=0,\dots,n-1\\-v_p(detT)+i=0 \end{subarray}}\D_{T,i,K}.\end{split}
\]In fact for any $i\in\Z, T\in GL_n(O)\setminus GL_n(F)/K$ such that $-v_p(detT)+i=0$, we have $\D_{T,i,K}\subset\M_K^0$ with the convention above. However one can always by multiplying with some $z\in F^\times$ to reduce to the cases $0\leq i\leq n-1$. Let $\gamma=(g,b)\in GL_n(F)\times D^\times$ be such that $gKg^{-1}=K, v_p(detg)+v_p(Nrdb)=0$, then the action of $\gamma$ on $\M_K$ induces $\gamma:\M_K^0\ra\M_K^0$. In this case $\gamma$ acts on the set of cells of $\M_K^0$ as in the same way as above.

To understand better the parameter set of cells of $\M_K^0$, we look at some ideas from \cite{F3}. Consider the embedding $\mathbb{G}_m\ra GL_n\times D^\times, z\mapsto (z,z^{-1})$ of algebraic groups over $F$. Let $\B(GL_n\times D^\times,F)$ be the (extended) Bruhat-Tits building of $GL_n\times D^\times$ over $F$, and $\B=\B(GL_n\times D^\times,F)/F^\times$ be its quotient by the action of $F^\times$ through the above embedding. The set $\B^0$ of vertices of $\B$, which we define by the quotient of the vertices of $\B(GL_n\times D^\times,F)$, can be described as the set of equivalent classes \[\{(\Lambda,M)|\Lambda\subset F^n\; \textrm{is an $O$-lattice}\;,M\subset D\;\textrm{is an $O_D^\times$-lattice}\}/\sim,\]where
\[(\Lambda_1,M_1)\sim (\Lambda_2,M_2)\Leftrightarrow \exists i\in\Z, \Lambda_2=\Lambda_1\pi^i, M_2=\pi^{-i}M_1,\]
see \cite{F3} 1.5. We can understand $\B$ in the following way. The (extended) Bruhat-Tits building of $GL_n$ over $F$ is the product $\B(PGL_n,F)\times\R$ of the Bruhat-Tits building of $PGL_n$ with $\R$, while the (extended) Bruhat-Tits building of $D^\times$ over $F$ is
$\B(D^\times,F)\simeq \R$. By construction \[\B=\B(GL_n\times D^\times,F)/F^\times\simeq (\B(PGL_n,F)\times\R\times\R)/\sim,\] where $(x,s,t)\sim (x',s',t')\Leftrightarrow x=x',s-s'=t'-t=nr$ for some $r\in\Z$. Thus any point $[x,s,t]$ in $\B$ can be written uniquely in the form $[x,s',t']$ for $x\in \B(PGL_n,F),s'\in\R, t'\in [0,n)$. The elements $(g,b)\in GL_n(F)\times D^\times$ act on $\B$ by $\forall [x,s,t]\in\B$, \[(g,b)[x,s,t]=[g^{-1}x,s+v_p(detg),t+v_p(Nrdb)].\]If we consider the right action of $GL_n(F)$ on $\B(PGL_n,F)$ by $xg:=g^{-1}x$, then we can also write $(g,b)[x,s,t]=[xg,s+v_p(detg),t+v_p(Nrdb)]$.

  On the other hand, consider the action of $F^\times$ on $GL_n(O)\setminus GL_n(F)\times D^\times/O_D^\times$ by $z(GL_n(O)g,dO_D^\times)=(GL_n(O)gz,zdO_D^\times),\;\forall z\in F^\times$, then the quotient set
\[(GL_n(O)\setminus GL_n(F)\times D^\times/O_D^\times)/F^\times\]is naturally identified with the set $\B^0$ after fixing the vertex $[O^n,O_D]\in \B^0$. For an element $[GL_n(O)g,dO_D^\times]$, the associated point in $\B^0$ can be written as $[GL_n(O)F^\times g,v_p(detg),v_p(detd)]$. Here $GL_n(O)F^\times g \in \B(PGL_n,F)$ by fixing the homothety class of $O^n$. Now let $K\subset GL_n(O)$ be an open compact subgroup, then the set
\[\I_K:=(GL_n(O)\setminus GL_n(F)/K\times D^\times/O_D^\times)/F^\times\]can be identified with the image $\B^0/K$ of $\B^0$ in the quotient space $\B/K$. If $\gamma=(g,b)\in GL_n(F)\times D^\times$ such that $gKg^{-1}=K$, then $\gamma$ acts on the set $\I_K$ by $[T,d]\mapsto [Tg,bd]$.
 There are two natural projection maps $\I_K\ra (GL_n(O)\setminus GL_n(F)/K)/F^\times$ and $\I_K\ra (D^\times/O_D^\times)/F^\times\simeq \Z/n\Z$. There is as well as a map
 \[\begin{split}GL_n(O)\setminus GL_n(F)/K&\times D^\times/O_D^\times\longrightarrow\Z\\
 &(T,d)\mapsto -v_p(detT)-v_p(Nrdd).\end{split}\]Let $(GL_n(O)\setminus GL_n(F)/K\times D^\times/O_D^\times)^0$ be the inverse image of $0$ under this map. Since the action of $F^\times$ does not change the values of the above map, it factors through $\I_K\ra\Z$.
 In fact there is a well defined continuous map
 \[\begin{split}\varphi: \B&\longrightarrow\R\\ [x,s,t]&\mapsto -s-t,\end{split}\] with each fiber stable under the action of $K$. The above map is induced by $\varphi$.
 For the $\gamma$ as above with further condition that $v_p(detg)+v_p(Nrdb)=0$, it stabilizes the subset
 \[\I_K^0:=(GL_n(O)\setminus GL_n(F)/K\times D^\times/O_D^\times)^0/F^\times\]for the above action. For the map $\varphi$ above, we see that $\I_K^0$ is identified with the quotient set $\varphi^{-1}(0)^0/K$ of vertices in $\varphi^{-1}(0)$.

 For any element $[T,d]\in \I_K$, the cell $[T,d]\D_K$ is well defined, which is what we denoted by $\D_{T,-v_p(Nrdd),K}$ above. As before we denote $[T,d]\D_K$ as \[\D_{[T,d],K}.\] Then we can rewrite the cell decompositions as
\[\begin{split}
&\M_K=\bigcup_{[T,d]\in \I_K}\D_{[T,d],K},\\
&\M_K^0=\bigcup_{[T,d]\in \I_K^0}\D_{[T,d],K}.
\end{split}\]
For $\gamma=(g,b)\in GL_n(F)\times D^\times$ as above, it acts on the cells in the way compatible with its action on $\I_K$:
\[\gamma(\D_{[T,d],K})=\D_{[Tg,bd],K}.\]

Recall there is a metric $d(\cdot,\cdot)$ on $\B$, so that $(GL_n(F)\times D^\times)/F^\times$ acts on it by isometries. If $d'(\cdot,\cdot)$ is the metric on $\B(PGL_n,F)$, then for two points $[x,s,t], [x',s',t']$ with $x,x'\in \B(PGL_n,F), s,s'\in\R, t,t'\in [0,n)$ we have \[d([x,s,t],[x',s',t'])=\sqrt{d'(x,x')^2+(s-s')^2+(t-t')^2}.\] The group $K$ acts on $\B$ through the natural morphisms $K\ra GL_n(F)\times D^\times\ra (GL_n(F)\times D^\times)/F^\times$. There is an induced metric $\ov{d}(\cdot,\cdot)$ on the quotient space $\B/K$:
\[\ov{d}(xK,yK):=\inf_{k,k'\in K}d(xk,yk')=\inf_{k\in K}d(xk,y)=\inf_{k\in K}d(x,yk),\;\forall\; xK,yK\in\B/K,\]
the last two equalities come from $d(xk,yk')=d(xk(k')^{-1},y)=d(x,yk'k^{-1})$. Since $K$ is compact, one checks it easily that this is indeed a metric on $\B/K$. With this metric, $\I_K,\I_K^0$ are both infinity discrete subset of $\B/K$, and any closed ball in $\B/K$ contains only finitely many elements of $\I_K$ and $\I_K^0$. We will directly work with the induced metric space
\[\I_K=\B^0/K.\]For $\gamma=(g,b)\in GL_n(F)\times D^\times$ with
$gKg^{-1}=K$, one can check by definition of $\ov{d}$ that the above action of $\gamma$ on $\I_K$ is isometric:
\[\ov{d}(\gamma x,\gamma x)=\ov{d}(x,x),\;\forall x\in \I_K.\]
Note that for $[T_1,d_1],[T_2,d_2]\in \I_K$, $\D_{[T_1,d_1],K}\bigcap\D_{[T_2,d_2],K}\neq\emptyset$ implies that
$v_p(detT_1)+v_p(Nrdd_1)=v_p(detT_2)+v_p(Nrdd_2)$. If we write $[T_1,d_1]=[x_1K,s_1,t_1], [T_2,d_2]=[x_2K,s_2,t_2]$ with $x_1,x_2\in \B(PGL_n,F), s_1,s_2\in\Z\subset\R, t_1,t_2\in[0,n)\bigcap\Z$ (i.e. $\exists r_1,r_2\in\Z, s.t.\; v_p(detT_i)=s_i+nr_i,v_p(Nrdd_i)=t_i-nr_i, i=1,2$), then $s_1+t_1=s_2+t_2, s_1-s_2=t_2-t_1\in [1-n,n-1]$, the distance \[\ov{d}([T_1,d_1],[T_2,d_2])=\inf_{k\in K}\sqrt{d'(x_1,x_2k)^2+2(s_1-s_2)^2}\]just depends on $\ov{d'}(x_1K,x_2K)$ for the induced metric $\ov{d'}$ on $\B(PGL_n,F)$ defined in the same way as $\ov{d}$.
By the construction of the locally finite sell decomposition of $\M_K$, we have the following proposition.
\begin{proposition}
There exists a constant $c>0$, such that for any $[T_1,d_1],[T_2,d_2]\in \I_K$ with $\ov{d}([T_1,d_1],[T_2,d_2])>c$, we have
\[\D_{[T_1,d_1],K}\bigcap\D_{[T_2,d_2],K}=\emptyset.\]
\end{proposition}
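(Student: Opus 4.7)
The plan is to combine the local finiteness of Fargues's cell decomposition with a translation argument that reduces an arbitrary pair of intersecting cells to one involving the base cell $\D_K$.

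First, by the local finiteness of the covering $\M_K=\bigcup_{\sigma\in\I_K}\D_{\sigma,K}$ proved in \cite{F4} and the compactness of $\D_K$, the set
\[S_0:=\{\sigma\in\I_K\mid\D_{\sigma,K}\cap\D_K\neq\emptyset\}\]
is finite. Set $c:=\max_{\sigma\in S_0}\ov{d}(\sigma,[e_K])$, where $[e_K]\in\I_K$ denotes the class of the base vertex corresponding to $\D_K$ itself. I claim that this $c$ works.

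Given $[T_1,d_1],[T_2,d_2]\in\I_K$ with $\D_{[T_1,d_1],K}\cap\D_{[T_2,d_2],K}\neq\emptyset$, choose a representative $(g_1,b_1)\in GL_n(F)\times D^\times$ of $[T_1,d_1]$. The element $(g_1^{-1},b_1^{-1})$ induces an isomorphism of Berkovich spaces $\gamma:\M_K\xrightarrow{\sim}\M_{K'}$ with $K':=g_1 K g_1^{-1}$. By the naturality of the cell decomposition with respect to the action of $GL_n(F)\times D^\times$ on the tower, $\gamma$ carries $\D_{[T,d],K}$ to $\D_{[Tg_1^{-1},b_1^{-1}d],K'}$; in particular it sends $\D_{[T_1,d_1],K}$ to the base cell $\D_{K'}$ and $\D_{[T_2,d_2],K}$ to a cell meeting $\D_{K'}$. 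On the building side, $\gamma$ acts on $\B$ by isometries and descends to an isometry $\B/K\to\B/K'$ (since $K$ and $K'$ are conjugate via $g_1$), hence
\[\ov{d}_K([T_1,d_1],[T_2,d_2])=\ov{d}_{K'}([e_{K'}],\gamma[T_2,d_2]).\]
The analogous constant $c(K')$ for $K'$ coincides with $c=c(K)$, as the whole $K'$-picture is the $\gamma$-transport of the $K$-picture, so we conclude $\ov{d}_K([T_1,d_1],[T_2,d_2])\leq c$.

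The main technical obstacle is verifying this equivariance assertion, i.e.\ that $\gamma$ intertwines the two cell decompositions with the stated index transformation. This relies on the intrinsic characterization $\D=\M^{ss,0}$ via the semi-stable locus, which is preserved both by the $D^\times$-action and by Hecke pullback, together with the naturality of the Gross--Hopkins fundamental domain in the tower. An incidental point is that $K'=g_1 K g_1^{-1}$ may fail to be contained in $GL_n(O)$; however, the definition of $\M_{K'}$, $\D_{K'}$, and of the cell decomposition extends without change to an arbitrary open compact subgroup of $GL_n(F)$, so this causes no trouble.
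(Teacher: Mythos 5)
Your strategy is to translate a general pair of intersecting cells to the base cell via the Hecke action, but there is a genuine gap at the step where you assert $c(K')=c(K)$.

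You correctly note that the Hecke action of $(g_1^{-1},b_1^{-1})$ gives an isomorphism $\gamma:\M_K\to\M_{K'}$ with $K'=g_1Kg_1^{-1}$, carrying $\D_{[T,d],K}$ to $\D_{[Tg_1^{-1},b_1^{-1}d],K'}$ and inducing an isometry $\B/K\to\B/K'$. However, this map sends $\D_{[T_1,d_1],K}$ (not $\D_K$) to the base cell $\D_{K'}$, and hence sends $[e_K]$ to $\gamma[e_K]$, which is \emph{not} $[e_{K'}]$ unless $(g_1,b_1)$ lies in $(GL_n(O)\times O_D^\times)\cdot F^\times$ modulo the level. Consequently, $\gamma$ transports the set $\{\sigma\in\I_K:\D_{\sigma,K}\cap\D_{[T_1,d_1],K}\neq\emptyset\}$ onto $S_0(K')$; it does \emph{not} transport $S_0(K)$ onto $S_0(K')$. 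The claim ``the whole $K'$-picture is the $\gamma$-transport of the $K$-picture'' therefore fails precisely at the base points, and the asserted identity $c(K')=c(K)$ is exactly a reformulation of the uniformity you are trying to prove: it amounts to saying that the maximal ``reach'' around $\D_{[T_1,d_1],K}$ is the same as the maximal reach around $\D_K$. No homogeneity under the $GL_n(F)\times D^\times$-action supplies this for free, because that group does not act transitively on cells while fixing the level $K$ (its action moves the level), and even at level $GL_n(O)$ the Hecke correspondences that move $\D$ to a general cell are genuine multivalued correspondences, not automorphisms.

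The paper proves the proposition by a different and more direct route: it invokes the explicit bounds in the proof of Fargues's local finiteness result (proposition 24 of \cite{F4}), which, via the Cartan decomposition $GL_n(O)\setminus GL_n(F)/GL_n(O)\simeq\Z^n_+$, shows that if $T.\Pi^{-i}\D$ meets $T'.\Pi^{-j}\D$ then the Cartan invariants of $T$ and $T'$ differ coordinatewise by at most a constant $C$ independent of $(T,i)$. This yields a uniform distance bound at level $GL_n(O)$ directly, and the general $K$ case follows by comparing $\B/K$ to $\B/GL_n(O)$ using a fundamental domain for $K$. To repair your argument you would need an input of this explicit kind; the abstract equivariance of the cell decomposition alone does not produce the uniform constant.
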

\begin{proof}
We need to prove that, there exists a constant $c>0$, such that for any $[T,d]\in\I_K$, and any $[T',d']\in\{[T',d']\in\I_K|\D_{[T',d'],K}\bigcap \D_{[T,d],K}\neq\emptyset\}$, we have $\ov{d}([T,d],[T',d'])\leq c$.
This just comes from the construction of the locally finite cell decomposition of $\M_K$, and the definition of $\ov{d}$. We just indicate some key points. First, for any fixed choice of fundamental domain $V_K$ in $\B$ for the action of $K$, by definition $\forall x,y\in V_K, d(x,y)\geq \ov{d}(xK,yK)$. Next, by the proof of proposition 24 of \cite{F4}, and the Cartan decomposition $GL_n(O)\setminus GL_n(F)/GL_n(O)\simeq \Z^n_+=\{(a_1,\dots,a_n)\in\Z^n|a_1\geq\cdots\geq a_n\}$, for any fixed the Hecke correspondence $T\in GL_n(O)\setminus GL_n(F)/GL_n(O), i\in\Z$, the finite set \[A_{[T,i]}:=\{[T',j]\in (GL_n(O)\setminus GL_n(F)/GL_n(O)\times \Z)/F^\times| T.\Pi^{-i}\D\bigcap T'.\Pi^{-j}\D\neq \emptyset\}\] is such that $\forall [T',j]\in A_{[T,i]}$ we have $-v_p(detT')+j=-v_p(detT)+i$; and if $T$ corresponds to the point $(a_1,\dots,a_n)\in\Z^n_+$, then for $j\in \Z/n\Z$ fixed, the set $T'\in GL_n(O)\setminus GL_n(F)/GL_n(O)$ with $[T',j]\in A_{[T,i]}$, correspond to the points $(a_1',\dots,a'_n)\in\Z^n_+$ such that $\sum_{k=1}^na_k'=\sum_{k=1}^na_k -i+j\;(mod\; n\Z), |a_k-a_k'|\leq C $ for all $k=1,\dots,n$, and $C>0$ is a constant doesn't depend on $[T,i]$. From these two points one can easily deduce the proposition for $K=GL_n(O)$, and the general case will be obtained as soon as the case $K=GL_n(O)$ holds.
\end{proof}

We remark that, in \cite{F2} Fargues defined an $O_{D^\times}$-invariant continuous map of topological spaces
\[\M^0\longrightarrow \B(PGL_n,F)/GL_n(O),\]
and identified the image of $\D$ under this map. However, this map depends quite on our special case, and in general there is no such a map from Rapoport-Zink spaces to Bruhat-Tits buildings. For any open compact subgroup $K\subset GL_n(O)$, there is also a continuous map $\M^0\ra \B(PGL_n,F)/K$, and we have a commutative diagram of continuous maps between topological spaces
\[\xymatrix{\M_K^0\ar[r]\ar[d]&\B(PGL_n,F)/K\ar[d]\\
\M^0\ar[r]&\B(PGL_n,F)/GL_n(O).}\]These maps are Hecke equivariant, thus compatible with the cell decomposition of $\M_K^0$.

\section{Lefschetz trace formula for Lubin-Tate spaces}

 In this section $\gamma=(g,b)\in GL_n(F)\times D^\times$ is an element such that both $g$ and $b$ are regular elliptic, $gKg^{-1}=K$ and $v_p(detg)+v_p(Nrdb)=0$. (Here we use the convention that an elliptic element is always semi-simple.) Since $\gamma$ is regular elliptic, the set of $\gamma$-fixed vertices $(\B^{0})^\gamma$ is non empty, cf. \cite{SS}. Let $\wh{o}$ be a fixed choice of point in $(\B^0)^\gamma$, and $o\in \I_K$ be its image in the quotient space. One can take the above choice of $\wh{o}$ so that $\wh{o}\in \varphi^{-1}(0)^0, o\in \I_K^0$. Then $\gamma(o)=o$ by the action $\gamma:\I_K^0\ra\I_K^0$.  For any real number $\rho>0$, we consider the subset of $\I_K^0$
\[A_\rho=\{x\in\I_K^0|\;\ov{d}(o,x)\leq \rho\},\]which is a finite set for any fixed $\rho$. Moreover since $\gamma(o)=o$ and $\ov{d}$ is $\gamma$-isometric, we have $\gamma(A_\rho)=A_\rho$.
\begin{definition}For any finite set $A\subset \I_K^0$, we define two subspaces of $\M_K^0$
\[\begin{split}&V_A=\bigcup_{[T,d]\in A}\D_{[T,d],K},\\&U_A=\M_K^0-\bigcup_{[T,d]\notin A}\D_{[T,d],K}.\end{split}\]
\end{definition}

\begin{proposition}
$U_A$ is an open subspace of $\M_K^0$, while $V_A$ is a compact analytic domain, and $U_A\subset V_A$.
\end{proposition}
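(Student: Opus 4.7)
My plan is to deduce the proposition directly from two properties of the cell decomposition established in Section 2: local finiteness of the covering $\M_K^0 = \bigcup_{[T,d]\in\I_K^0} \D_{[T,d],K}$, and the fact that each $\D_{[T,d],K}$ is a compact analytic domain in $\M_K^0$.

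For $V_A$ the claim is immediate: a finite union of compact analytic domains is again a compact analytic domain, and $A$ is finite. For the inclusion $U_A\subset V_A$, I will use that the cells cover $\M_K^0$. Any $x\in U_A\subset\M_K^0$ lies in some cell $\D_{[T',d'],K}$ by the cell decomposition; if one had $[T',d']\notin A$ then $x\in\bigcup_{[T,d]\notin A}\D_{[T,d],K}$, contradicting the very definition of $U_A$. Therefore $[T',d']\in A$ and $x\in V_A$.

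The one step of substance is the openness of $U_A$, equivalently the closedness in $\M_K^0$ of $Z:=\bigcup_{[T,d]\notin A}\D_{[T,d],K}$. Each individual cell is closed, but an infinite union of closed sets need not be closed; here local finiteness saves us. For every $x\in\M_K^0$ there is an open neighborhood $W$ of $x$ meeting only finitely many cells, so $W\cap Z$ is a finite union of closed subsets of $W$ and hence closed in $W$. Thus $W\setminus Z$ is an open neighborhood of $x$, and whenever $x\in U_A$ it is contained in $U_A$. Patching over $x$ yields openness of $U_A$ in $\M_K^0$.

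This is the only place where the cell decomposition enters in a nontrivial way, and I do not anticipate any serious obstacle beyond invoking local finiteness cleanly; Proposition 2.1 supplies a quantitative version (cells whose indices are $\ov{d}$-far apart cannot meet) that could be used to produce an explicit $W$ if desired, but it is not strictly required for the argument above.
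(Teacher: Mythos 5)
Your proposal is correct and follows essentially the same route as the paper: openness of $U_A$ from the local finiteness of the cell covering (so the complementary union of closed cells is closed), compactness of $V_A$ as a finite union of compact analytic domains, and the inclusion $U_A\subset V_A$ from the fact that the cells indexed by $A$ together with those indexed by its complement cover $\M_K^0$. The paper states these three points more tersely, but there is no difference in substance.
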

\begin{proof}
Since $\M_K^0-U_A=\bigcup_{[T,d]\notin A}\D_{[T,d],K}$, which is a locally finite union of closed subsets, therefore it is closed, and $U_A$ is open. $V_A$ is a finite union of compact analytic domains thus so is itself. The inclusion simply comes from the fact $\M_K^0=V_A\bigcup(\M_K^0-U_A)$.
\end{proof}

 When $\rho\ra\infty$, the finite sets $A_\rho$ exhaust $\I_K^0$. For any $\rho\geq 0$, we denote $U_\rho=U_{A_\rho}, V_\rho=V_{A_\rho}$. Since $U_\rho$ is relatively compact, we can compute the cohomology of $\M_K^0$ as
\[ H^i_c(\M_K^0\times\Cm_p,\ov{\Q}_l)=\varinjlim_{\rho}H^i_c(U_\rho\times\Cm_p,\ov{\Q}_l).\]
Moreover, for $\rho>>0$ large enough, the cohomology groups $H^i_c(U_\rho\times\Cm_p,\ov{\Q}_l)$ is constant and bijective to $H^i_c(\M_K^0\times\Cm_p,\ov{\Q}_l)$, see proposition 3.5.

  For the $\gamma$ above, we consider the action $\gamma:\M_K^0\ra\M_K^0$. Since $\gamma(A_\rho)=A_\rho$, \[\gamma(U_\rho)=U_\rho, \gamma(V_\rho)=V_\rho.\] $\gamma$ thus acts also on the cells contained in $V_\rho$: $\gamma(\D_{[T,d],K})=\D_{[Tg,bd],K}$. Passing to cohomology,
  $\gamma$ induces an automorphism
\[\gamma: H^i_c(U_\rho\times\Cm_p,\ov{\Q}_l)\ra H^i_c(U_\rho\times\Cm_p,\ov{\Q}_l).\]Consider
\[H^\ast_c(U_\rho\times\Cm_p,\ov{\Q}_l)=\sum_{i}(-1)^iH^i_c(U_\rho\times\Cm_p,\ov{\Q}_l)\] as an element in some suitable Grothendieck group, and the trace of $\gamma$
\[Tr(\gamma|H^\ast_c(U_\rho\times\Cm_p,\ov{\Q}_l))=\sum_{i}(-1)^iTr(\gamma|H^i_c(U_\rho\times\Cm_p,\ov{\Q}_l)).\]Let
$\textrm{Fix}(\gamma|\M_K^0\times\Cm_p)$ be the set of fixed points of $\gamma$ on $\M_K^0\times\Cm_p$, then each fixed point is simple since the $p$-adic period mapping is \'etale (cf. \cite{St} theorem 2.6.8).

We will use our result of cell decomposition of $\M_K^0$, to verify that the action of $\gamma$ satisfies the conditions of Mieda's theorem 3.13 \cite{Mi1}, thus deduce a Lefschetz trace formula in our case. Recall that, if $k$ is a complete non-archimedean field and $k^0$ is its ring of integers, then the category of Hausdorff strictly $k$-analytic spaces is equivalent to the category of adic spaces which are taut and locally of finite type over $spa(k,k^0)$, see \cite{Hu1} chapter 8. If $X$ is a Hausdorff strictly $k$-analytic space, we denote by $X^{ad}$ the associated adic space, which is taut and locally of finite type over $spa(k,k^0)$.

\begin{theorem}
Let the notations and assumptions be as above. There exist an open compact subgroup $K'\subset GL_n(O)$ and a real number $\rho_0$, such that for all open compact subgroup $K\subset K'$ which is normalized by $g$ and all $\rho\geq\rho_0$, we have
\[Tr(\gamma|H^\ast_c(U_\rho\times\Cm_p,\ov{\Q}_l))=\#\textrm{Fix}(\gamma|\M_K^0\times\Cm_p).\]
For $\rho$ sufficiently large, the left hand side is just $Tr(\gamma|H^\ast_c(\M_K^0\times\Cm_p,\ov{\Q}_l))$.
\end{theorem}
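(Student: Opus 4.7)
The strategy is to verify the hypotheses of Mieda's Theorem 3.13 in \cite{Mi1} for the pair $(U_\rho, V_\rho)$, after passing to the associated adic spaces via the equivalence of categories recalled above. Mieda's conditions amount to requiring that $V_\rho$ be a compact analytic domain containing the closure of $U_\rho$ in $\M_K^0$ (satisfied by Proposition 3.2), that $\gamma$ preserve both $U_\rho$ and $V_\rho$ (already established), that all fixed points of $\gamma$ in $U_\rho$ be simple, and crucially that $\gamma$ have no fixed points on the ``boundary'' $V_\rho \setminus U_\rho$. Once these are in place, Mieda's theorem gives directly
\[
\mathrm{Tr}(\gamma \mid H^*_c(U_\rho \times \Cm_p, \ov{\Q}_l)) = \#\mathrm{Fix}(\gamma \mid U_\rho \times \Cm_p),
\]
and it then remains to pass from $U_\rho$ to $\M_K^0$ on both sides.

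The easy pieces come first. Simpleness of fixed points was already noted in the discussion preceding the theorem, as a consequence of the \'etaleness of the $p$-adic period mapping (\cite{St}, Theorem 2.6.8). Simpleness together with the local finiteness of the cell decomposition implies that any compact subset of $\M_K^0$ contains only finitely many $\gamma$-fixed points, so for $\rho$ sufficiently large $\mathrm{Fix}(\gamma \mid \M_K^0 \times \Cm_p) \subset U_\rho$, identifying the right hand side with $\#\mathrm{Fix}(\gamma \mid \M_K^0 \times \Cm_p)$. On the cohomology side, each $H^i_c(\M_K^0 \times \Cm_p, \ov{\Q}_l) = \varinjlim_\rho H^i_c(U_\rho \times \Cm_p, \ov{\Q}_l)$ is finite dimensional, so the inductive system stabilizes and the left hand side becomes $\mathrm{Tr}(\gamma \mid H^*_c(\M_K^0 \times \Cm_p, \ov{\Q}_l))$ for $\rho \gg 0$.

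The main obstacle is the boundary condition. If $x \in V_\rho \setminus U_\rho$ were $\gamma$-fixed, then $x$ would lie in some cell $\D_{[T',d'],K}$ with $[T',d'] \notin A_\rho$; since $\gamma x = x$, the cells $\D_{[T',d'],K}$ and $\D_{[T'g,bd'],K} = \gamma(\D_{[T',d'],K})$ have non-empty intersection, and Proposition 2.1 forces $\ov{d}([T',d'], \gamma[T',d']) \leq c$. Thus $[T',d']$ lies in
\[
B := \{\,y \in \I_K^0 : \ov{d}(y, \gamma y) \leq c\,\}.
\]
It therefore suffices to show that $B$ is bounded: choosing $\rho_0$ with $B \subset A_{\rho_0}$ contradicts $[T',d'] \notin A_\rho$ for all $\rho \geq \rho_0$. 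This is where the regular elliptic hypothesis and the smallness of $K$ enter. The centralizer of $\gamma$ in $(GL_n(F) \times D^\times)/F^\times$ is a compact-modulo-center torus (\cite{SS}), so its fixed locus $\B^\gamma$ is bounded in $\B$. On the CAT(0) Euclidean building $\B$, the displacement function $d(x, \gamma x)$ is convex with zero set $\B^\gamma$ and grows at least linearly in $d(x, \B^\gamma)$, by inspecting the action of $\gamma$ on the link at a fixed vertex where regularity prevents any fixed direction. For $K$ contained in a suitably small $K' \subset GL_n(O)$, the $K$-orbits on $\B$ have uniformly small diameter, and this linear lower bound survives on the quotient metric $\ov{d}$; since $\I_K^0$ is infinity-discrete in $\B/K$, $B$ is finite. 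This completes the verification of Mieda's hypotheses; the trace formula follows from his theorem, and the two stabilizations above identify it with the stated equality.
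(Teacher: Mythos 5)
Your overall strategy matches the paper's: use the cell decomposition, establish a displacement lower bound from regular ellipticity, and feed this into Mieda's theorem after passing to adic spaces. However there are two genuine gaps.

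The more serious one concerns passing from the displacement bound on the building $\B'=\B(PGL_n,F)$ to the quotient $\B'/K$. You write that ``for $K$ contained in a suitably small $K'\subset GL_n(O)$, the $K$-orbits on $\B$ have uniformly small diameter, and this linear lower bound survives on the quotient metric $\ov{d}$.'' This is false: $K\subset GL_n(O)$ fixes a vertex $x_0$ and acts by isometries, so the orbit of a point $x$ has diameter controlled only by $d(x,x_0)$, which is unbounded. The correct argument, as the paper explains, is to observe that $\ov{d'}(x'K,x'gK)=\inf_{g'\in gK}d'(x',x'g')$, and then to invoke the fact (from the proof of Lemma 12 in \cite{SS}) that for $K$ sufficiently small one has $(\B')^{g'}=(\B')^g$ for every $g'\in gK$, so Rousseau's inequality $d'(x',x'g')\geq 2d'(x',(\B')^{g'})\sin(\theta/2)$ (\cite{R}, Proposition~2.3) holds with a bound uniform over the compact set $gK$. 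Your appeal to ``inspecting the link at a fixed vertex'' is in the spirit of Rousseau's result but should be replaced by the citation; and the uniformity over $gK$ must be justified as above, not via a false claim about orbit diameters.

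The second gap is in the application of Mieda's theorem. You phrase the conditions as applying to the pair $(U_\rho, V_\rho)$, with the ``boundary'' being $V_\rho\setminus U_\rho$ and the key condition being that $\gamma$ has no fixed points there. But Theorem~3.13 of \cite{Mi1} is stated for an open adic space together with a proper compactification in the adic sense. The relevant compactification is not $V_\rho^{ad}$ (which is only quasi-compact) but the closure $\ov{V_\rho^{ad}}$ in $(\M_K^0)^{ad}$, whose boundary $\ov{V_\rho^{ad}}-V_\rho^{ad}=\bigcup_{[T,d]\in A_\rho-A_{\rho-c}}W_{[T,d]}$ must be decomposed into pieces $W_{[T,d]}$ with $\gamma(W_{[T,d]})\cap W_{[T,d]}=\emptyset$ --- a condition stronger than mere absence of fixed points. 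The theorem then gives $Tr(\gamma\mid H^*_c(V_\rho^{ad}))=\#\mathrm{Fix}(\gamma\mid V_\rho)$, and a \emph{separate} computation (using the additivity from \cite{Hu2} together with the induction argument of Proposition~4.10 in \cite{Mi1} to show $Tr(\gamma\mid H^*_c((V_\rho^{ad}-U_\rho^{ad})))=0$, then Huber's comparison theorem) is needed to transfer this to $Tr(\gamma\mid H^*_c(U_\rho))$. Your boundedness of the set $B$ does supply the needed cell-displacement estimate, so the substance can be salvaged, but the verification of Mieda's hypotheses and the passage to $U_\rho$ must be restructured along the lines above.

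Finally, a minor point: finite dimensionality of $\varinjlim_\rho H^i_c(U_\rho\times\Cm_p,\ov{\Q}_l)$ does not by itself force the system to stabilize; the paper's Proposition~3.5 establishes the stabilization via a concrete description of the $V_\rho^{ad}$ as balls and Huber's finiteness theorem \cite{Hu3}.
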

\begin{proof}
Since $g\in GL_n(F)$ is elliptic, we first note the following fact:
for any sufficiently small open compact subgroup $K\subset GL_n(O)$ such that $gKg^{-1}=K$, we have
\[\ov{d}(x,\gamma x)\ra\infty,\;\textrm{when}\; x\in \I_K^0,  \ov{d}(o,x)\ra\infty.\]

In fact, since $o,x\in \I_K^0$, write $o=[o'K,-s,s], x=[x'K,-t,t]$ with $o',x'\in \B(PGL_n,F)^0, s,t\in [0,n-1]\bigcap\Z$, then $\gamma(x)=[x'gK,v_p(detg)-t,v_p(Nrdb)+t]=[x'gK,-t',t']$ for some unique $t'=v_p(detg)-t+nr\in [0,n-1]\bigcap\Z$. If we denote the metric on $\B'=\B(PGL_n,F)$ by $d'(\cdot,\cdot)$ and the induced metric on $\B'/K$ by $\ov{d'}$ as before, then we just need to prove that
\[\ov{d'}(x'K, x'gK)\ra\infty,\;\textrm{when}\; x'K\in (\B')^0/K,  \ov{d'}(o'K,x'K)\ra\infty.\]
To prove this statement, we first work with $\B'$ itself by not the quotient. Since $g$ is elliptic, the fixed points set $(\B')^{g}$ is nonempty and compact. Moreover, for $K$ sufficiently small, $(\B')^g=(\B')^{g'}$ for any $g'\in gK$ (cf. the proof of lemma 12 in \cite{SS}). For $o'\in (\B')^g$ fixed, a simple triangle inequality shows that $d'(x',(\B')^g)\ra\infty$ when $d'(x',o')\ra\infty$, since $(\B')^g$ is compact. On the other hand, for any automorphism $\sigma$ of $\B'$ with $(\B')^\sigma\neq\emptyset$, there exists a constant $0<\theta\leq \pi$ which just depends on $\B'$ and $\sigma$, such that \[d'(x',\sigma x')\geq 2d'(x',(\B')^\sigma)\sin(\frac{\theta}{2}),\]
see \cite{R} proposition 2.3. In particular, $d'(x', x'g')\ra\infty$ when $d'(o',x')\ra \infty$ for any $g'\in gK$. As $K$ is compact this deduces the above statement.

For $\rho$ sufficiently large,
\[\begin{split}
\M_K^0-U_\rho&=\bigcup_{[T,d]\in \I_K^0-A_\rho}\D_{[T,d],K}\\
V_\rho-U_\rho&=\bigcup_{[T,d]\in {A_\rho-A_{\rho-c}}}F_{[T,d]},\end{split}\]where for $[T,d]\in A_\rho$, \[F_{[T,d]}=\D_{[T,d],K}\bigcap(\M_K^0-U_\rho),\] which is nonempty if and only if $[T,d]\in A_\rho-A_{\rho-c}$ by the above proposition, in which case  $F_{[T,d]}$ is a compact analytic domain in $\D_{[T,d],K}\subset V_\rho$. For $K$ sufficiently small, $\rho$ sufficiently large and $[T,d]\in \I_K^0-A_{\rho-c}$, by the lemma $\ov{d}([T,d],\gamma([T,d]))>c$, thus \[\D_{[T,d],K}\bigcap\gamma(\D_{[T,d],K})=\emptyset, \; F_{[T,d]}\bigcap\gamma(F_{[T,d]})=\emptyset \;(\textrm{for}\;[T,d]\in A_\rho-A_{\rho-c}).\]

To apply Mieda's theorem, we pass to adic spaces. We have the locally finite cell decomposition of the adic space $(\M_K^0)^{ad}$:
\[(\M_K^0)^{ad}=\bigcup_{[T,d]\in\I_K^0}\D^{ad}_{[T,d],K}\]where each cell $\D^{ad}_{[T,d],K}$ is an open quasi-compact subspace, $\D^{ad}_{[T_1,d_1],K}\bigcap\D^{ad}_{[T_2,d_2],K}\neq \emptyset \Leftrightarrow \D_{[T_1,d_1],K}\bigcap\D_{[T_2,d_2],K}\neq \emptyset$, and the action of $\gamma$ on $(\M_K^0)^{ad}$ induces an action on the cells in the same way as the case of Berkovich analytic spaces. By \cite{Hu1} 8.2, $U_\rho^{ad}$ is an open subspace of $(\M_K^0)^{ad}$, which is separated, smooth, partially proper. On the other hand, $V_\rho^{ad}=\bigcup_{[T,d]\in A_\rho}\D^{ad}_{[T,d],K}$ is a quasi-compact open subspace. Consider the closure $\ov{V_\rho^{ad}}=\bigcup_{[T,d]\in A_\rho}\ov{\D^{ad}_{[T,d],K}}$ of $V_\rho^{ad}$ in $(\M_K^0)^{ad}$, which is a proper pseudo-adic space. We know that $\ov{V_\rho^{ad}}$ (resp. $\ov{\D^{ad}_{[T,d],K}}$) is the set of all specializations of the points in $V_\rho^{ad}$ (resp. $\D^{ad}_{[T,d],K}$). Moreover $\gamma$ induce automorphisms $\gamma: \ov{V_\rho^{ad}}\ra \ov{V_\rho^{ad}}, V_\rho^{ad}\ra V_\rho^{ad}$, $U_\rho^{ad}\ra U_\rho^{ad}$. Since $V_{\rho-c}^{ad}\subset U_\rho^{ad}\subset V_\rho^{ad}$, we have $\ov{V_\rho^{ad}}-V_\rho^{ad}=\bigcup_{[T,d]\in A_\rho-A_{\rho-c}}(\ov{\D^{ad}_{[T,d],K}}-\D^{ad}_{[T,d],K})$. Note \[ \D^{ad}_{[T_1,d_1],K}\bigcap \D^{ad}_{[T_2,d_2],K}\neq\emptyset
 \Leftrightarrow \ov{\D^{ad}_{[T_1,d_1],K}}\bigcap \ov{\D^{ad}_{[T_2,d_2],K}}\neq\emptyset.\]For $[T,d]\in A_\rho-A_{\rho-c}$, let $W_{[T,d]}=\ov{\D^{ad}_{[T,d],K}}-\D^{ad}_{[T,d],K}$. By the paragraph above, for $\rho>>0$ we have $\gamma(W_{[T,d]})\bigcap W_{[T,d]}=\emptyset$. One sees the conditions of theorem 3.13 of \cite{Mi1} hold for $V_\rho^{ad}$ and its compactification $\ov{V_\rho^{ad}}$, i.e.
  \[Tr(\gamma|H^\ast_c(V^{ad}_\rho\times\Cm_p,\ov{\Q}_l))=\#\textrm{Fix}(\gamma|V^{ad}_\rho\times\Cm_p)
  =\#\textrm{Fix}(\gamma|V_\rho\times\Cm_p).\]Here and in the following $V^{ad}_\rho\times\Cm_p:=V^{ad}_\rho\times spa(\Cm_p,O_{\Cm_p})$, and similar notations for other adic spaces. By \cite{Hu2} proposition 2.6 (i) and lemma 3.4, we have
    \[Tr(\gamma|H_c^\ast(V^{ad}_\rho\times\Cm_p,\ov{\Q}_l))=Tr(\gamma|H_c^\ast(U^{ad}_\rho\times\Cm_p,\ov{\Q}_l))+Tr(\gamma|
  H_c^\ast((V^{ad}_\rho-U_\rho^{ad})\times\Cm_p,\ov{\Q}_l)).\] By the paragraph above one can see it easily by the induction argument of the proof of proposition 4.10 in \cite{Mi1} that $Tr(\gamma|H_c^\ast((V^{ad}_\rho-U^{ad}_\rho)\times\Cm_p,\ov{\Q}_l))=0$. Thus we can conclude by Huber's comparison theorem on compactly support cohomology of Berkovich spaces and adic spaces (cf. proposition 8.3.6 of \cite{Hu1}),
\[Tr(\gamma|H^\ast_c(U_\rho\times\Cm_p,\ov{\Q}_l))=Tr(\gamma|H^\ast_c(U_\rho^{ad}\times\Cm_p,\ov{\Q}_l))
=Tr(\gamma|H_c^\ast(V^{ad}_\rho\times\Cm_p,\ov{\Q}_l))=
\#\textrm{Fix}(\gamma|V_\rho\times\Cm_p).\]
But as reason above for $\rho>>0$ there is no fixed points of $\gamma$ outside $V_\rho\times\Cm_p$,
\[\#\textrm{Fix}(\gamma|V_\rho\times\Cm_p)=\#\textrm{Fix}(\gamma|\M_K^0\times\Cm_p).\]
The last statement in the theorem comes from the following proposition 3.5.

\end{proof}

\begin{remark}In fact we can use $V_\rho$ to compute the cohomology of $\M_K^0$ directly when passing to adic spaces:
\[H_c^i(\M_K^0\times\Cm_p,\ov{\Q}_l)\simeq H_c^i((\M_K^0)^{ad}\times\Cm_p,\ov{\Q}_l)=\varinjlim_{\rho}H_c^i(V^{ad}_\rho\times\Cm_p,\ov{\Q}_l),\;\forall \,i\geq0,\]
here the second equality comes from proposition 2.1 (iv) of \cite{Hu2}. We prefer to transfer back the results to Berkovich spaces, so we insist on working with the open subspaces $U_\rho$.
\end{remark}

In fact, the formal models $\wh{\M}^0_K$ are algebraizable: they are the formal completions at closed points of some Shimura varieties as in \cite{HT}, or one can find the algebraization directly as in theorem 2.3.1 in \cite{St}. So we have for all integer $i\geq 0$ \[H_c^i(\M_K^0\times\Cm_p,\ov{\Q}_l)\simeq (\varprojlim_{r}H^i_c(\M_K^0\times\Cm_p,\Z/l^r\Z))\otimes_{\Z_l}\ov{\Q}_l,
\]and similarly for the cohomology without compact support. We have the following proposition.
\begin{proposition}
Let the notations be as above. Then for $\rho>>0$ and all integer $i\geq 0$, we have bijections
\[H^i_c(\M_K^0\times\Cm_p,\ov{\Q}_l)\simeq H^i_c(V^{ad}_\rho\times\Cm_p,\ov{\Q}_l)\simeq H_c^i(U_\rho\times\Cm_p,\ov{\Q}_l).\]
\end{proposition}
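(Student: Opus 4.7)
The plan is to deduce both isomorphisms from three ingredients: the exhaustion of $\M_K^0$ by the relatively compact opens $U_\rho$, the dual adic exhaustion of $(\M_K^0)^{ad}$ by the quasi-compact opens $V_\rho^{ad}$ recorded in Remark 3.4, and the finite-dimensionality of the total compactly supported cohomology coming from algebraizability.

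First I would set up the colimit descriptions. Since each $U_\rho$ is an open subspace of $\M_K^0$ contained in the compact analytic domain $V_\rho$, the $U_\rho$ are relatively compact and exhaust $\M_K^0$ as $\rho\to\infty$ (this uses that $\I_K^0$ is a discrete subset of $\B/K$ for which any closed ball is finite, so $A_\rho$ exhausts $\I_K^0$). By the definition of $H^i_c$ for Berkovich analytic spaces (\cite{F1} \S4, \cite{Hu2}), this gives
\[H^i_c(\M_K^0\times\Cm_p,\ov{\Q}_l)=\varinjlim_\rho H^i_c(U_\rho\times\Cm_p,\ov{\Q}_l),\]
and by Remark 3.4 we also have the parallel colimit on the adic side
\[H^i_c((\M_K^0)^{ad}\times\Cm_p,\ov{\Q}_l)=\varinjlim_\rho H^i_c(V_\rho^{ad}\times\Cm_p,\ov{\Q}_l).\]

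Second I would invoke algebraizability of $\wh{\M}_K^0$: combined with the formula $H^i_c(\M_K^0\times\Cm_p,\ov{\Q}_l)\simeq(\varprojlim_r H^i_c(\M_K^0\times\Cm_p,\Z/l^r\Z))\otimes\ov{\Q}_l$ and the proper base change/comparison with the algebraization's completion, one concludes that $H^i_c(\M_K^0\times\Cm_p,\ov{\Q}_l)$ is finite-dimensional (consistent with \cite{Hu2} Theorem 3.3 cited in the introduction). Similarly, $H^i_c(V_\rho^{ad}\times\Cm_p,\ov{\Q}_l)$ is finite-dimensional because $V_\rho^{ad}$ is quasi-compact smooth inside a partially proper ambient and Huber's finiteness results apply. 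Since the two colimits above are finite-dimensional filtered colimits of finite-dimensional vector spaces along transition morphisms induced by open immersions $j_!$, they must stabilize: there exists $\rho_0$ such that for every $\rho\geq\rho_0$ the maps $H^i_c(U_\rho)\to H^i_c(\M_K^0)$ and $H^i_c(V^{ad}_\rho)\to H^i_c((\M_K^0)^{ad})$ are bijective.

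Third I would connect the two sides via the sandwich $V^{ad}_{\rho-c}\subset U^{ad}_\rho\subset V^{ad}_\rho$ established in the proof of Theorem 3.3 (using Proposition 2.1). This gives a commutative diagram of maps induced by open immersions,
\[H^i_c(V^{ad}_{\rho-c}\times\Cm_p,\ov{\Q}_l)\longrightarrow H^i_c(U^{ad}_\rho\times\Cm_p,\ov{\Q}_l)\longrightarrow H^i_c(V^{ad}_\rho\times\Cm_p,\ov{\Q}_l),\]
and applying Huber's comparison between compactly supported cohomology of a Hausdorff strictly Berkovich space and its associated adic space (\cite{Hu1} 8.3.6) identifies the middle term with $H^i_c(U_\rho\times\Cm_p,\ov{\Q}_l)$. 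Once $\rho-c\geq\rho_0$, both outer arrows become isomorphisms with $H^i_c((\M_K^0)^{ad}\times\Cm_p,\ov{\Q}_l)\simeq H^i_c(\M_K^0\times\Cm_p,\ov{\Q}_l)$, so the sandwiched middle arrow is an isomorphism too, yielding both claimed bijections.

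The main obstacle is the stabilization argument in the second step: a filtered colimit of finite-dimensional spaces that is finite-dimensional need not stabilize in general. To bypass this, I would not try to show the transition maps stabilize on their own, but rather exploit the sandwich together with the fact that the maps to $H^i_c(\M_K^0)$ factor through each $V^{ad}_\rho$ and are compatible. Finite-dimensionality of the colimit forces a cofinal system of isomorphisms, and the sandwich squeezes $U_\rho$ between two $V^{ad}$-terms that stabilize, which is enough for the conclusion.
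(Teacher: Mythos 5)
Your first and third steps are sound, but the second step — which you yourself flag as the obstacle — contains a genuine gap that the proposed bypass does not close. From $\dim_{\ov{\Q}_l}\varinjlim_\rho H^i_c<\infty$ and each term finite-dimensional one only gets that the maps $H^i_c(V^{ad}_\rho)\to H^i_c((\M_K^0)^{ad})$ are eventually \emph{surjective}; injectivity, hence stabilization, does not follow. (Take $V_\rho=\ov{\Q}_l$ with zero transition maps: colimit is $0$, finite-dimensional, yet no map to the colimit is an isomorphism.) The sandwich $V^{ad}_{\rho-c}\subset U^{ad}_\rho\subset V^{ad}_\rho$ does not rescue this either: if the $V$-system happened to stabilize, that would only give $\alpha_{\rho}:H^i_c(V^{ad}_{\rho-c})\to H^i_c(U^{ad}_\rho)$ injective and $\beta_\rho:H^i_c(U^{ad}_\rho)\to H^i_c(V^{ad}_\rho)$ surjective, and the composite $\beta_\rho\circ\alpha_\rho$ being an isomorphism is compatible with $\dim H^i_c(U^{ad}_\rho)$ being strictly larger (take $V=\ov{\Q}_l$ constant, $W=\ov{\Q}_l^2$ constant, $\alpha=(1,0)$, $\beta=\mathrm{pr}_1$: then $\beta\circ\alpha=\mathrm{id}$ but $\ker\beta\neq0$ persists forever). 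So neither leg of your sandwich is forced to be an isomorphism, and the claim ``finite-dimensionality forces a cofinal system of isomorphisms'' is false for transition maps of $H^i_c$ along open immersions, which in general are neither injective nor surjective.

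What actually makes the stabilization go through in the paper is a concrete geometric input that you do not invoke. Using the Newton-polygon coordinates $x_1,\dots,x_{n-1}$ on $(\M^0)^{rig}$, the fundamental domain $\D^{rig}$ is a closed polydisc, and by Fargues's analysis of Newton polygons under Hecke correspondences the $V^{rig}_\rho\times\Cm_p$ are \emph{locally} unions of closed balls of the form $\{|x_i(z)|\leq\epsilon_\rho\}$ in $(\M^0)^{ad}\times\Cm_p$. This is exactly the geometric boundedness needed to apply Huber's finiteness/stabilization theorem (\cite{Hu3} Theorem 2.9), which — unlike pure dimension counting — directly produces the eventual bijectivity of $H^i_c(V^{ad}_\rho)\to H^i_c((\M_K^0)^{ad})$, and then the comparison \cite{Hu1} 8.3.6 transports this to the Berkovich side and to $U_\rho$. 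You need to supply this ingredient (or an equivalent boundedness/algebraization input allowing an application of a stabilization theorem) to make the argument correct; the formal colimit manipulation alone cannot produce the eventual isomorphism.
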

\begin{proof}
This comes from the description of $V_\rho$ and Huber's theorem 2.9 in \cite{Hu3}. Recall the fundamental domain $\D\subset\M^0$ is associated to an admissible open subset $\D^{rig}\subset (\M^0)^{rig}$. On the rigid analytic space $(\M^0)^{rig}$ there is a natural coordinate system $x_1,\dots,x_{n-1}$, such that for $x=(x_1,\dots,x_{n-1})\in (\M^0)^{rig}$, the Newton polygon of multiplication by $\pi$ on the formal group law associated to $\pi$-divisible group $H_x$ is the convex envelope of the points $(q^i,v(x_i))_{0\leq i\leq n}$, where $x_0=0,x_n=1, q=\#O/\pi$, cf. \cite{F3} 1.1.5. Under this coordinate system \[\D^{rig}=\{x=(x_1,\dots,x_{n-1})\in(\M^0)^{rig}|v(x_i)\geq 1-\frac{i}{n}, i=1,\dots,n-1\},\]cf. loc. cit. 1.4. Thus after base change to $\Cm_p$ it is isomorphic to a closed ball. In \cite{F2} section 5 Fargues had described the Newton polygons of the points in a Hecke orbit. In particular at level $K=GL_n(O)$ the admissible open subsets $V^{rig}_\rho\times\Cm_p$ are locally described by closed balls. Then this is also the case for any level $K$. Now pass to adic spaces, $V^{ad}_\rho\times\Cm_p$ are quasi-compact open subsets and locally described by $\mathbb{B}_{\epsilon_\rho}=\{z\in(\M^0)^{ad}\times\Cm_p| |x_i(z)|\leq \epsilon_\rho\}$. Since $U_\rho^{ad}\times\Cm_p, (\M^0)^{ad}\times\Cm_p$ can be described as unions of ascending chains of quasi-compact open subsets locally in the above forms, by theorem 2.9 of \cite{Hu3} and proposition 8.3.6 of \cite{Hu1} one can conclude.
\end{proof}

Let $\gamma=(g,b)\in GL_n(F)\times D^\times$, with $g,b$ both regular elliptic and $gKg^{-1}=K$. For the quotient space $\M_K/\pi^\Z$ we have a nice fixed points formula by considering the $p$-adic period mapping, which is non zero if and only if $v_p(degg)+v_p(Nrdb)$ is divisible by $n$. Fix compatible Haar measures on $GL_n(F)$ and $G_{g_b}$ (see below), we can also write it as some suitable orbit integral (\cite{St} theorem 2.6.8, \cite{Mi1} proposition 3.3).
\[\begin{split}\#\textrm{Fix}(\gamma|(\M_K/\pi^\Z)\times\Cm_p)&=n\#\{h\in GL_n(F)/\pi^\Z K|h^{-1}g_bh=g^{-1}\}\\&=nVol(G_{g_b}/\pi^\Z)\int_{GL_n(F)/G_{g_b}}\frac{1_{g^{-1}K}}{Vol(K)}(z^{-1}g_bz)dz,\end{split}\]
where $g_b\in GL_n(F)$ is an element stably conjugate to $b\in D^\times$, $G_{g_b}$ is the centralizer of $g_b$ in $GL_n(F)$, $Vol(K)$ (resp. $Vol(G_{g_b}/\pi^\Z)$) is the volume of $K$ (resp. $G_{g_b}/\pi^\Z$ for the induced Haar measure), and $1_{g^{-1}K}$ is the characteristic function of $g^{-1}K$. After some representation theory arguments, the Lefschetz trace formula and the above fixed points formula will lead to a local proof of the realization of Jacquet-Langlands correspondence in the cohomology of Lubin-Tate spaces (cf. section 4 of \cite{St} and \cite{Mi2}).

\end{document}